\documentclass[generic,preprint]{imsart}
\RequirePackage{natbib}


\startlocaldefs

\usepackage{amssymb,amsmath,amsfonts,amsthm,amscd,amstext,mathtools}
\usepackage{latexmac}
\usepackage[utf8]{inputenc}
\usepackage{xspace,enumerate}
\usepackage{graphicx}
\usepackage[pdftex]{hyperref}
\usepackage[charter]{mathdesign}
\usepackage[colorinlistoftodos,prependcaption,textsize=tiny]{todonotes}
\presetkeys{todonotes}{color=blue!30}{}
\setcounter{secnumdepth}{2}


\numberwithin{equation}{section}
\setcounter{secnumdepth}{3}
\newtheorem{theorem}{Theorem}[section]

\newtheorem{lemma}[theorem]{Lemma}
\newtheorem{proposition}[theorem]{Proposition}

\newtheorem{remark}[theorem]{Remark}
\newtheorem{definition}[theorem]{Definition}

\newtheorem{assumption}{Assumption}

\long\def\xcom#1{}



\newcommand{\dbphi}{\bar{D}_{\varphi}}


\date{\today}

\newcommand{\pac}{path-continuous\xspace}
\newcommand{\pdi}{path-differentiable\xspace}

\endlocaldefs

\begin{document}

\begin{frontmatter}
\title{Some deterministic structured population models which are limit of
  stochastic individual based models}
\runtitle{Structured population model limit of Individual based model}

\begin{abstract} \footnote{\today}
  The aim of this paper is to tackle part of 
  the program set by Diekmann et al. in their seminal paper
  \cite{DiekGHKMT01}. We quote

  ``It remains to investigate whether, and in what sense, the nonlinear
  deterministic model formulation is the limit of a stochastic model
  for initial population size tending to infinity''

  We set a precise and general framework for a stochastic individual
  based model : it is a piecewise deterministic Markov process defined
  on the set of finite measures. We then establish a law of large
  numbers under conditions easy to verify. Finally we show how this
  applies to old and new examples.

\end{abstract}

\author{\fnms{Philippe}
  \snm{Carmona}\corref{}\ead[label=e1]{philippe.carmona@univ-nantes.fr}
\ead[label=u1,url]{http://www.math.sciences.univ-nantes.fr/~carmona/}}

\address{Laboratoire de Math\'ematiques Jean Leray UMR 6629\\
Universit\'e de Nantes, 2 Rue de la Houssini\`ere\\
BP 92208, F-44322 Nantes Cedex 03, France\\ \printead{e1}
}

\runauthor{P. Carmona et al.}

\begin{keyword}[class=MSC]
\kwd[Primary ]{60J80,60K35}
\kwd[; Secondary ]{92D30, 62P10, 60F99}
\end{keyword}

\begin{keyword}
\kwd{Mathematical Epidemiology}
\kwd{Piecewise Deterministic Markov Processes}
\kwd{Interacting measure valued processes}
\kwd{transport equation}
\end{keyword}

\end{frontmatter}

\newcommand{\spm}{ structured population model \xspace}
\section{Introduction}
We shall focus on linear \spm, as defined in
\cite{DiekGMA98,DiekGHKMT01} : the evolution of individuals ($i$-state
evolution) depends on the individual state, and of the rest of the
population, but not on environmental variables. We shall consider
nonlinear \spm in a forthcoming paper \cite{CAR18b}.\footnote{In fact
  what we call linear model are not linear in the strict sense of \cite{DiekGMA98,DiekGHKMT01} since the evolution may
  depend say on the proportion of individuals of a certain type} We shall
furthermore restrict ourselves to \spm whose building blocks for the
$i$-model are solutions of ODE (see e.g. \cite{AckIto05, DieGylMet07,
  KOOI00}).

We shall establish a law of large number: when initial population goes
to infinity, the measure valued stochastic process converges in
distribution to a deterministic function, measure valued, that
satisfies an integrodifferential equation which is the equation for a
deterministic \spm equation.

There has already been considerable work done in this direction : when
the individuals have discrete traits, for SIR and compartmental models
\cite{Kurtz1970}, when the individuals have continuous traits
\cite{fourniermeleard04, ChaFerMel08,ChaFerMel08b}, for age \spm
\cite{FerTran09,Tran08,Sol87}. The probabilistic toolbox used consists
mainly of a representation in terms of Poisson random measures, and an
identification of martingale problems with subtle limit theorems.

We introduce in this paper a significantly different toolbox. The
stochastic \spm is a PDMP, a \emph{Piecewise Deterministic Markov Process},
on the space of finite measures over $E$, with $E$ the state space for
$i$-model.\footnote{Let us observe  that in former works (see e.g. \cite{TranMetz13}) the
  deterministic evolution of the population between jump times has
  already been made clear, even if they do not explicitly introduce a PDMP}

The first advantage of our framework is that you do not
have to introduce explicit Poisson random measures, nor to perform a
clever labelling of individuals (see
e.g. \cite{fourniermeleard04,TranMetz13}) ; the martingale problem
also is implicit since it is the one associated to the PDMP.

A second advantage of our approach is that we are able to introduce
fairly complex $i$-model evolutions. (This in fact is the starting
point of our work : we wanted to model norovirus evolution in
individuals). An example is given in section
\ref{subsec:hospathexample} where we obtain a new, non trivial,
integro differential equation with transport terms.

Another new feature of our approach is that we can incorporate, e.g. when we have
reproduction, mean numbers of descendants.

\smallskip
Some Notations : \begin{itemize}
  \item $\nu \ll \mu$ means that the
measure $\nu$ is absolutely continuous with respect to $\mu$.
\item $\varphi \sharp m$ is the image of measure $m$ by the measurable
function $\varphi$.
\end{itemize}

\section{The linear stochastic structured population model}

\subsection{Definitions}
The stochastic \spm, abbreviated SSPM, is a PDMP (see
Appendix \ref{sec:appendicepdmp}) on the state $\Mrond_F(E)$ of finite measures over a
measurable  state
space $(E,\Erond)$: $E$ is metrisable, separable and locally compact ;
$\Erond$ is the Borel $\sigma$-field.

\bigskip
The \emph{deterministic dynamic} is driven by a continuous time
dynamical system on $E$ ,  a measurable map
\begin{equation*}
\begin{aligned}
\varphi : &\R^+ \times E &\to & E\\
&(t,x) &\to & \varphi_t(x)
\end{aligned}
\end{equation*}
such that $\varphi_t \circ \varphi_s = \varphi_{t+s}$ and $\varphi_0=id$, and
$t\to \varphi_t(x)$ continuous for any $x$. 

We lift this dynamical system to the space $\Mrond_F(E)$, $\phi : \R^+
\times \Mrond_F(E) \to \Mrond_F(E)$ by the prescription $\phi_t :=
\varphi_t \sharp m$ is the image of the measure $m$ by $\varphi_t$, that is
for every bounded measurable $h$:
$$ \crochet{\phi_t(m), h} := \int h(y) \phi_t(m)(dy) = \int
h(\varphi_t(x))\, m(dx)\,.$$
Then for any $h\in\dbphi$ we have $f(m)=\crochet{m,h} \in \Drond_\phi$
and
$ A_\phi f (m) = \crochet{m,A_\varphi h}$.

\bigskip
The \emph{jump dynamic} is a measurable kernel $\mu:\Mrond_F(E) \to
\Mrond_F(\Mrond_F(E))$. A \emph{population}, a $p$-state in the terminology of \cite{DiekGMA98}, is a
finite measure on $E$ : $m \in \Mrond_F(E)$.

Here is the basic construction step:
\begin{itemize}
\item Given a transition rate function $\alpha:\Mrond_F(E) \times E \to
\R_+$, $\alpha(m,x)$ is the transition rate of the individual $x$ in
the population $m$. 
\item Given a reproduction kernel $k:\Mrond_F(E)
\times E \to \Mrond_S(E)$, so that is $k(m,x)$ is a signed finite
measure on $E$
\item
We define a kernel
\begin{equation}
  \label{eq:conskernelalk}
  \mu(m,dm') := \int_E m(dx) \alpha(m,x) \delta_{m + k(m,x)}(dm')
\end{equation}
\end{itemize}
Of course, since we want that $\mu(m)$ has support on $\Mrond_F(E)$
we require that if the measure $m + k(m,x)$ is not positive, then $\alpha(m,x)=0$.

Usually, to build complex models, we add a finite number of kernels :
given $\alpha_i,k_i$ we let
$$ \mu(m,dm') = \sum_i \mu_i(m,dm')= \int_E m(dx) \sum_i \alpha_i(m,x) \delta_{m +
  k_i(m,x)}(dm')\,.$$
The transition rate of this kernel is
$$ q(m) = \int_{\Mrond_F(E)} \mu(m,dm') = \sum_i \int_E m(dx) \alpha_i(m,x)\,.$$

\begin{definition} The PDMP $\nu$ driven by such $(\mu,\varphi)$ is called a
  $(\mu,\varphi)$ SSMP.
\end{definition}

It is evidently \emph{stable}, that
  is does not explode, in finite time if the rate function $q(m)$ is
  bounded. We shall however use another non explosion criterion more
  suitable for complex models.

 \bigskip
 Eventually, a specific transition rate function, of interaction type, may be built by
 integrating a measurable function mutation kernel
 $$ \alpha(m,x) := \int \bar{\alpha}(m,x,y) \m(dy)\,.$$

 \subsection{Properties} Let us be more precise on the generator of
 this PDMP (see the Appendix).

 \begin{lemma}
   1) Assume that $h:E\to \R$ is \pac for $\varphi$ and bounded. Then the
   function $f:\Mrond_F(E) \to \R$ defined by $f(m)=\crochet{m,h}$ is \pac for $\phi$ and bounded.

   2) Assume furthermore that $h$ is \pdi for $\varphi$ with
   $A_\varphi h$ bounded. Then $f$ is \pdi for $\phi$ with
   $$ A_\phi f(m) = \crochet{m,A_\varphi(h)} = \int m(dx)
   A_\varphi(h)(x)\,.$$
   Therefore, we have the formula
  
 \begin{align*}
   L^\nu f(m) &= A_\phi f(m) + \int \mu(m,dm') (f(m+m') -f(m)) \\
   &= \crochet{m,A_\varphi(h)} + \sum_i \int m(dx) \alpha_i(m,x)
   \crochet{k_i(m,x),h}\,.
 \end{align*}
\end{lemma}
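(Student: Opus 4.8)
The statement has two parts, and both reduce to transferring path-continuity / path-differentiability from the scalar dynamical system $\varphi$ on $E$ to the lifted system $\phi$ on $\Mrond_F(E)$, using the linearity of $m \mapsto \crochet{m,h}$ in the measure argument. The key computational identity is already recorded in the excerpt: since $\phi_t(m) = \varphi_t \sharp m$, we have by the definition of the image measure
\begin{equation*}
  f(\phi_t(m)) = \crochet{\phi_t(m), h} = \int h(\varphi_t(x))\, m(dx)\,.
\end{equation*}
So the whole behaviour of $t \mapsto f(\phi_t(m))$ is governed, inside the integral, by $t \mapsto h(\varphi_t(x))$, and the plan is to push the regularity of the latter through the integral sign.

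\medskip
\noindent\textbf{Part 1 (path-continuity).} First I would fix $m$ and show $t \mapsto f(\phi_t(m))$ is continuous. For each fixed $x$, path-continuity of $h$ for $\varphi$ gives continuity of $t \mapsto h(\varphi_t(x))$; boundedness of $h$ gives the uniform domination $\valabs{h(\varphi_t(x))} \le \norm{h}_\infty$, which is $m$-integrable since $m$ is finite. Dominated convergence then yields continuity of $t \mapsto \int h(\varphi_t(x))\,m(dx) = f(\phi_t(m))$. Boundedness of $f$ on bounded subsets of $\Mrond_F(E)$ is immediate from $\valabs{f(m)} = \valabs{\crochet{m,h}} \le \norm{h}_\infty\, m(E)$. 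One should also check whatever measurability-in-$x$ condition the definition of \pac requires, but this follows from measurability of $(t,x)\mapsto \varphi_t(x)$ composed with $h$.

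\medskip
\noindent\textbf{Part 2 (path-differentiability and the generator formula).} Now I would differentiate at $t=0$. Path-differentiability of $h$ means $\frac{d}{dt}\big|_{t=0} h(\varphi_t(x)) = A_\varphi h(x)$ exists pointwise, and $A_\varphi h$ is assumed bounded. The natural route is to write the difference quotient
\begin{equation*}
  \frac{f(\phi_t(m)) - f(m)}{t}
  = \int \frac{h(\varphi_t(x)) - h(x)}{t}\, m(dx)\,,
\end{equation*}
and pass to the limit $t \downarrow 0$ under the integral. Pointwise the integrand converges to $A_\varphi h(x)$; to justify interchange I would use a mean-value / fundamental-theorem-of-calculus bound showing the difference quotient is dominated by $\norm{A_\varphi h}_\infty$ (or a suitable integrable bound supplied by the definition of \pdi in the Appendix), again $m$-integrable because $m(E) < \infty$. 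This gives $A_\phi f(m) = \int A_\varphi h(x)\, m(dx) = \crochet{m, A_\varphi h}$. The final $L^\nu f$ formula is then pure bookkeeping: substitute this $A_\phi f$ into the general PDMP generator $L^\nu f(m) = A_\phi f(m) + \int \mu(m,dm')\,(f(m') - f(m))$, note that the jump sends $m$ to $m + k_i(m,x)$ so $f(m') - f(m) = \crochet{m + k_i(m,x), h} - \crochet{m,h} = \crochet{k_i(m,x), h}$ by linearity, and integrate against the kernel \eqref{eq:conskernelalk} summed over $i$.

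\medskip
\noindent\textbf{Main obstacle.} The only genuinely delicate point is the domination step in Part 2: the precise definition of \pdi (in the Appendix) dictates exactly which bound on the difference quotient is available, and whether differentiability is required uniformly in $x$ or merely pointwise with an integrable envelope. If \pdi only supplies a pointwise derivative without a uniform Lipschitz-type control of $t \mapsto h(\varphi_t(x))$ in $x$, then the naive dominated-convergence argument needs to be supplemented — for instance by writing $h(\varphi_t(x)) - h(x) = \int_0^t A_\varphi h(\varphi_s(x))\,ds$ and using boundedness of $A_\varphi h$ together with the flow property $\varphi_s \circ \varphi_0 = \varphi_s$ to get a clean bound uniform in $x$. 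Everything else is a routine application of Fubini / dominated convergence exploiting the finiteness of $m$ and the boundedness hypotheses.
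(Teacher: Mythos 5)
Your proof is correct and follows essentially the same route as the paper: part 1 via the image-measure identity $f(\phi_t(m))=\int h(\varphi_t(x))\,m(dx)$ plus dominated convergence, and part 2 by differentiating under the integral sign using $\frac{d}{dt}h(\varphi_t(x))=A_\varphi h(\varphi_t(x))$. The paper's part 2 is a one-line remark; your extra care (the bound $\valabs{h(\varphi_t(x))-h(x)}\le t\,\norme{A_\varphi h}_\infty$ from writing $h(\varphi_t(x))-h(x)=\int_0^t A_\varphi h(\varphi_s(x))\,ds$, and the bookkeeping for $L^\nu f$) just makes explicit what the paper leaves implicit.
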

\begin{proof}
  1) Fix $m \in \Mrond_F(E)$. We have:
  $$ f(\phi_t(m)) = \crochet{\phi_t(m),h} = \crochet{\varphi_t\sharp
    m, h} = \int h(\varphi_t(x))\, m(dx)\,.$$
  By assumption, for any $x$, $t\to h(\varphi_t(x))$ is
  continuous. Since it is bounded, we infer by dominated convergence
  that $t\to f(\phi_t(m))$ is continuous.

  2) The proof is similar : we differentiate under the integral sign
  and use $\displaystyle \frac{d}{dt} h(\varphi_t(x)) = A_\varphi h(\varphi_t(x))$.
\end{proof}

\subsection{Examples} \label{subsec:examples}
 Let us see now how versatile this SSMP framework is by obtaining
 classical and non classical  models.
 \subsubsection{The Basic stochastic SIR }\label{subsec:basicsir}
 The state space is made of three compartments $E=\ens{S,I,R}$ with
 $\sigma$-field $\Erond=\Prond(E)$. There is no deterministic
 evolution $\varphi_t=id$ : individual stay in their
 compartment. Therefore the process is just a continuous time Markov
 chain on $\Mrond_F(E)$.
 
 The first kernel  models the prescription ``infected people recover at rate
 $\gamma>0$'':
 $$ \alpha_1(m,x)=\mu_1(x,dz) = \gamma \un{x=I}\,,\qquad 
 k_1(m,x) = \delta_R - \delta_I\,.$$

 At rate $\gamma$ an infected individual recovers, that is it is
 removed from the population, the term $-\delta_I$, and a recovered is
 added, the term $+\delta_R$.
 
 The second kernel is an interaction kernel, that models the prescription
 ``susceptible are infected by infected people at per capita rate
 $\beta>0$ ``:
 $$ \bar{\alpha}_2(m,x,y) = \beta \un{x=S,y=I}\,, \quad k_2(m,x) =
 \delta_I - \delta_S\,.$$
which yields:
$$ \alpha_2(m,x)= \beta I(m) \un{x=S}\,, $$
with $I(m)=m(\ens{I})$ the number of infected people. The total rate
function is, with $S(m)=m(\ens{S})$,
$$ q(m) =q_1(m)+q_2(m) = \gamma I(m) + \beta I(m) S(m)\,.$$

 \subsubsection{Age since infection structured SIR}\label{subsec:agessir}
This model has also been introduced by Kermack and McKendrick, see
\cite{kermac27}, \cite[section 1.5.2]{perthamebook07}, \cite[section
13.2]{martchevabook15}.

The infection rate and recovery rate depend on the age since infection
, i.e. are two measurable functions $\gamma,\beta:\R_+ \to \R_+$. The
state space is
$$ E=\ens{S} \cup \ens{I}\times [0,+\infty[ \cup \ens{R}\,,\quad
\Erond=\sigma(\ens{S},\ens{R},\ens{I}\times B, B \in \Brond(\R))\,.$$

We let $c:E\to \ens{S,I,R}$ be the compartment function and $a(I,t)=t$
be the age function. Then the recovery mechanism is described by

$$ \alpha_1(m,x)= \gamma(a(x)) \un{c(x)=I} \,,\quad
k_1(m,x)=\delta_R -\delta_x\,.$$
And the infection mechanism is induced by
$$\bar{\alpha}_2(m,x,y) = \beta(a(y)) \un{c(x)=S,c(y)=I}
\,, k_2(m,x) = \delta_{(I,0)}(dz)- \delta_x$$
so that
$$\alpha_2(m,x) = \lambda(m) \un{c(x)=S}\,,$$
with $\lambda(m)$ the total rate of infection of population $m$:
$$\lambda(m) := \int \beta(a(y)) \un{c(y)=I} m(dy)\,.$$
The rate functions are
$$ q_1(m) = \int \gamma(a(y)) \un{c(y)=I} m(dy)\,,\quad q_2(m) = S(m)
\lambda(m)\,$$
and the driving  dynamical system is
$$ \varphi_t(S)=S, \varphi_t(R)=R,\varphi_t(I,s) = (I, t+s)\,.$$

\subsubsection{A simple host/pathogen interaction with immigration of
  pathogen}\label{subsec:hospath}

This example cannot be considered as an age structured epidemiological
model, and therefore is totally new.

The host pathogen interaction model comes from Gilchrist and Sasaki,
see e.g. \cite[section 14.2.2, equation (14.3)]{martchevabook15} and
the references therein.

The continuous dynamical system $(\varphi_t)_{t\ge 0}$ is the flow of the
ODE
\begin{equation}\label{eq:gilsasa}
     \left\{
     \begin{split}
       \frac{dP}{dt} &= r P -c BP\\
       \frac{dB}{dt} &= a BP\,.
     \end{split}
\right.
\end{equation}
with state space $E=(0,+\infty)^2$. $B$ is the quantity of  immune
cells, $P$ of pathogen cells ; $r>0$ is the pathogen reproduction
rate, $c>0$ the pathogen clearance rate by the immune cells, $a$ the
stimulation of immune cells production by the pathogen. The pathogen
is always cleared, since every trajectory converges to $(P^*=0,B^*)$
with $B^*$ depending on the initial conditions.

We shall see that if we impose some absolute continuity on a mutation
kernel, then the large population limit is an integro differential
\emph{transport equation}.

\subsubsection{The Bell-Anderson model}
This model is described in \cite[Section 3]{MetzDiek86} and
\cite{Jag99}. A cell with size $x$ dies with intensity $d(x)$ and
splits into two cells of equal size $x/2$ with intensity
$b(x)$. An individual cell growths with rate $g(x)>0$.

There is a minimal size $a>0$ and maximal cell size $4a$. Initially
all cells have size in $[a,2a]$, no cell with size smaller than 2a can
divide.

The state space is therefore $E=\etc{a,4a}$ with its Borel sigma
field. Cell-growth is modelled by the flow $(\varphi_t)_{t\ge 0}$ of
the ODE
$$ \dot{x} = g(x)\,.$$
We assume thus that $E$ is stable by the flow.

Death is modelled by
$$ \alpha_1(m,x)= d(x) \ge 0\,,\quad k_1(m,x) = -\delta_x\,,$$
and reproduction by
$$ \alpha_2(m,x) = b(x)\ge 0\,,\quad k_2(m,x)= -\delta_x + 2
\delta_{x/2}\,.$$
We assume that $b(x)=0$ is $x< 2a$.

The total rate of population $m$ is thus
$$ q(m) = q_1(m) + q_2(m) = \int m(dx) (b(x) + d(x))\,.$$

\section{Large Population Limit : Law of Large Numbers}

Let $(\nu_t)_{t\ge 0}$ be a $(\mu,\varphi)$ SSMP process. Our first concern
is to ensure that the size of the population
$\nu_t(E)=\crochet{\nu_t,1}$ does not explodes in
finite time. This shall not only ensure non explosion but also yield
useful bounds on the size.

\begin{assumption}[growth control] \label{ass:growth}
  \begin{enumerate}
    \item The variation norms of the reproduction kernels $k_i$ are
uniformly  bounded : $C_k:=\sup_{i,m,x} \norme{k_i(m,x)}_{VT} < +\infty$.
\item Let $\bar{I}$ be the set of  $i$ such that there exists $m,x$ with
  $\alpha_i(m,x)>0$ and  $k_i(m,x,E)>0$. Then
  there exists a constant $C_q$ such that
  $$ q_i(m)  \le C_q (1 + \crochet{m,1}) \qquad(i\in\bar{I}, m\in\Mrond_F(E))\,.$$
\end{enumerate}
\end{assumption}
\begin{remark}
  The first assumption ensures that a jump cannot increase or decrease
  the population of more than $C_k$ unit. The second assumption
  controls the rate of the jumps.
\end{remark}
\begin{proposition}[mass control]\label{pro:masscontrol}
  Assume that for some $p\ge 1$ we have
  $$ \esp{\crochet{\nu_0,1}^p} < +\infty\,.$$
  Then, under the Assumption \ref{ass:growth}, 
  \begin{enumerate}
    \item The $(\mu,\phi)$-SSMP
  process is defined on $[0,+\infty[$.
  \item There exists a constant $C>0$ such that for any $t>0$,
$$  \esp{\crochet{\nu_t,1}^p} \le C e^{Ct}\,.$$
  \end{enumerate}
\end{proposition}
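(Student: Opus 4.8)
The plan is to run a Lyapunov/moment argument for the total mass $\crochet{\nu_t,1}$ using the test function $V(m):=(1+\crochet{m,1})^p$, and to extract both non-explosion and the moment bound simultaneously from a single localised estimate. The structural fact that makes everything work is that the deterministic flow conserves the total mass: taking $h\equiv 1$ (so that $A_\varphi h=0$) in the Lemma gives $\crochet{\phi_t(m),1}=\crochet{m,1}$ for all $t$, hence $t\mapsto V(\phi_t(m))$ is constant and $A_\phi V\equiv 0$. Consequently $V$ lies in the domain of the flow part of the generator and $L^\nu V$ reduces to its jump part alone.

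First I would prove the generator inequality $L^\nu V(m)\le C\,V(m)$ with an explicit $C$. Writing $X=\crochet{m,1}$ and $\Delta_i(m,x)=\crochet{k_i(m,x),1}=k_i(m,x,E)$,
$$ L^\nu V(m)=\sum_i\int_E m(dx)\,\alpha_i(m,x)\,\bigl[(1+X+\Delta_i(m,x))^p-(1+X)^p\bigr]. $$
Since $u\mapsto(1+u)^p$ is nondecreasing, every summand with $\Delta_i\le 0$ is nonpositive and can be discarded from an upper bound; the surviving ones have $\Delta_i(m,x)>0$, which by definition forces $i\in\bar I$ and, by the first part of Assumption~\ref{ass:growth}, $0<\Delta_i(m,x)\le\norme{k_i(m,x)}_{VT}\le C_k$. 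Convexity of $u\mapsto(1+u)^p$ for $p\ge 1$ (mean value theorem with nondecreasing derivative $p(1+u)^{p-1}$) gives the pointwise bound
$$ (1+X+\Delta_i)^p-(1+X)^p\le p\,C_k\,(1+X+C_k)^{p-1}\le p\,C_k\,(1+C_k)^{p-1}(1+X)^{p-1}. $$
Integrating, bounding each $\int_E m(dx)\alpha_i(m,x)$ by $q_i(m)$, and invoking the second part of Assumption~\ref{ass:growth} through $\sum_{i\in\bar I}q_i(m)\le\valabs{\bar I}\,C_q\,(1+X)$ yields $L^\nu V(m)\le C\,(1+X)^p=C\,V(m)$ with $C=p\,C_k(1+C_k)^{p-1}\valabs{\bar I}\,C_q$.

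Next I would convert this into the moment bound by Dynkin's formula and Grönwall's lemma, handling explosion by localisation. Let $T_n$ be the $n$-th jump time and $\sigma_n=\inf\{t:\crochet{\nu_t,1}\ge n\}$, and set $\tau_n=T_n\wedge\sigma_n$. On $[0,t\wedge\tau_n]$ the mass never exceeds $n+C_k$ (it moves only at jumps, by at most $C_k$) and there are at most $n$ jumps, so $V(\nu_{\cdot\wedge\tau_n})$ is bounded and Dynkin's formula applies rigorously:
$$ \esp{V(\nu_{t\wedge\tau_n})}=\esp{V(\nu_0)}+\esp{\int_0^{t\wedge\tau_n}L^\nu V(\nu_s)\,ds}\le\esp{V(\nu_0)}+C\int_0^t\esp{V(\nu_{s\wedge\tau_n})}\,ds. $$
Grönwall gives $\esp{V(\nu_{t\wedge\tau_n})}\le\esp{V(\nu_0)}\,e^{Ct}$, a bound uniform in $n$. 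On $\{\sigma_n\le t\wedge T_n\}$ one has $\crochet{\nu_{t\wedge\tau_n},1}\ge n$, whence $\mathbb{P}(\sigma_n\le t\wedge T_n)\le\esp{V(\nu_0)}e^{Ct}/(1+n)^p$; letting $n\to\infty$ and using that the mass-increasing jumps (total rate $\le\valabs{\bar I}C_q(1+\crochet{\nu_s,1})$, increment $\le C_k$) are dominated by a non-explosive linear birth process, one concludes $\sigma_n\to\infty$ a.s., i.e. the population size does not explode in finite time, which is part (1). Finally, letting $n\to\infty$ and using Fatou's lemma in the Grönwall estimate gives $\esp{\crochet{\nu_t,1}^p}\le\esp{V(\nu_t)}\le\esp{V(\nu_0)}\,e^{Ct}$, and $\esp{V(\nu_0)}\le 2^{p-1}(1+\esp{\crochet{\nu_0,1}^p})<\infty$ by convexity, which is part (2).

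The main obstacle is precisely the localisation bookkeeping: one must not invoke Dynkin's formula before the stopped process is known to have bounded mass and finitely many jumps, and the non-explosion of part (1) and the moment bound of part (2) have to be read off together from the single uniform-in-$n$ estimate rather than assumed at the outset. Everything else — the conservation of mass by the flow, the discarding of mass-decreasing jumps, and the convexity estimate — is routine once that scaffolding is in place.
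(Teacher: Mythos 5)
Your proof is correct and follows essentially the same route as the paper's: both take a power of the total mass as Lyapunov function (the paper uses $\crochet{m,1}^p$, you use $(1+\crochet{m,1})^p$), exploit $A_\phi f=0$ by mass conservation of the flow, bound the jump part of the generator by discarding the non-mass-increasing kernels and invoking Assumption~\ref{ass:growth} on $i\in\bar I$, localise, apply Gronwall uniformly in the localisation parameter, and then get non-explosion from a Markov-type inequality and the moment bound from Fatou. Your stopping at $T_n\wedge\sigma_n$ is a slightly more careful version of the paper's localisation at $\tau_a$, but it is a refinement of, not a departure from, the same argument.
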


When the size of the initial population is approximately $n$,
e.g. if $\nu_0=\sum_{i=1}^n \delta_{x_i}$ is deterministic, the $x_i$
being the individuals, we can renormalize by considering $(\unsur{n}
\nu_t)_{t\ge 0}$. The following theorem yields sufficient conditions
for a law of large number, i.e. the convergence of the renormalization
to a deterministic limit.

\begin{assumption}[regular kernel]\label{ass:regular}
  Assume that the kernel $\mu$ satisfy  on the
  space $\Mrond_S(E)$ of bounded variation signed measure on $E$, that
  for any $m\in \Mrond_S(E)$ and any $r>0$, there exists constants
  $C_1,C_2$ such that with $B(m,r) = \ens{m' \in \Mrond_S(E) :
    \norme{m-m'}_{TV} \le r}$,
  \begin{gather}
    \sup_{i,x\in E, m'\in B(m,r)} \norme{\alpha_i(m,x)k_i(m,x) -\alpha_i(m',x)k_i(m',x)}_{TV}
    \le C_1 \norme{m-m'}_{TV} \\
    \sup_{i,m'\in B(m,r)x\in E} \alpha_i(m',x) \le C_2.
  \end{gather}
\end{assumption}
\begin{remark}
  This assumption entails some uniform Lipschitz bound in the total
  variation norm that is necessary not only to  establish uniqueness in the
  limiting integro differential equation (Proposition
  \ref{pro:uniqueness}) but also to prove tightness (compactness) of
  processes ad thus the existence of limits (see Step 1 of the proof
  of Theorem \ref{thm:llngsir}).
\end{remark}
\begin{theorem}\label{thm:llngsir}
  Let $(\nu^n_t)_t\ge 0$ be a $(\mu^{(n)},\varphi)$ SSMP process
  with the scaling
  $$ \alpha_i^{(n)}(nm,x)=\alpha_i(m,x)\,,\quad k_i^{(n)}(nm,x)= k_i(m,x)\,,$$
  with $\mu$ a fixed jump dynamic satisfying
  Assumptions \ref{ass:growth},\ref{ass:regular} and the bound on the total rate function
  $$ q(m) \le C'_q(1 + \crochet{m,1} + \crochet{m,1}^2)\,.$$
  Let $X^n_t = \unsur{n} \nu^n_t$ be the renormalized measure valued
  process. Assume that
  \begin{enumerate}
    \item For some $p\ge 3$, $\sup_n \esp{\crochet{X^n_0,1}^p} < +\infty$.
    \item There exists $\xi_0 \in \Mrond_F(E)$ such that $X^n_0$
    converges to $\xi_0$ in probability.
    \item  The set $\dbphi$ of functions $h$ bounded, \pdi, such that
    $A_\varphi h$ is bounded and \pac is dense in $C_0(E)$.
  \end{enumerate}
Then $(X^n_t)_{t\ge 0}$ converges in probability in
$\mathbb{D}([0,T],\Mrond_F(E))$ to a deterministic continuous function
$(\xi_t)_{0\le t\le T}$ which satisfies: for all $h:\R_+\times E \to
\R$, such that for fixed $x$, $t\to h(t,x)$ is $C^1$ and for every
$t$, $x\to h(t,x)\in \dbphi$,
\begin{equation}\label{eq:integrodifxit}
  \frac{d}{dt}\crochet{\xi_t,h} = \crochet{\xi_t,\partial_th(t,.) +
    A_\varphi(h)(t,.)} +\sum_i  \int
  \xi_t(dx) \alpha_i(\xi_t,x) \crochet{h(t,.),k_i(\xi_t,x,.)}\,. 
  \end{equation}

\end{theorem}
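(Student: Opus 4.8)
The plan is to follow the classical scheme for deriving deterministic limits of measure-valued jump processes (as in \cite{fourniermeleard04,ChaFerMel08}), but organised around the PDMP generator of the Lemma rather than around an explicit Poisson representation. The backbone is a semimartingale decomposition of $\crochet{X^n_t,h}$ for test functions $h\in\dbphi$, a tightness argument, identification of any subsequential limit as a solution of \eqref{eq:integrodifxit}, and finally uniqueness to pin down the whole sequence. Applying the generator formula of the Lemma to $F(m)=\crochet{m,h}$ and using the scaling $\alpha_i^{(n)}(nm,x)=\alpha_i(m,x)$, $k_i^{(n)}(nm,x)=k_i(m,x)$, one checks that writing $\nu^n=nX^n$ produces an overall factor $n$ in $L^{\nu^n}F$ which exactly cancels the renormalisation $X^n=\unsur{n}\nu^n$, giving
\begin{equation*}
\crochet{X^n_t,h} = \crochet{X^n_0,h} + \int_0^t \left( \crochet{X^n_s,A_\varphi h} + \sum_i \int X^n_s(dx)\,\alpha_i(X^n_s,x)\,\crochet{k_i(X^n_s,x),h} \right) ds + M^{n,h}_t ,
\end{equation*}
where $M^{n,h}$ is a martingale. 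The finite variation part is already the right-hand side of \eqref{eq:integrodifxit} (for $h$ independent of $t$), so the crux is to make $M^{n,h}$ disappear.

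The martingale is controlled through its predictable bracket. Since only the jump part of the PDMP contributes, the same scaling yields $\crochet{M^{n,h}}_t = \unsur{n}\int_0^t \sum_i \int X^n_s(dx)\,\alpha_i(X^n_s,x)\,\crochet{k_i(X^n_s,x),h}^2\,ds$. Using the bound $\alpha_i\le C_2$ of Assumption \ref{ass:regular} together with $\norme{k_i(m,x)}_{TV}\le C_k$ of Assumption \ref{ass:growth}, this is dominated by $\unsur{n}\,C\,\norme{h}_\infty^2 \int_0^t \crochet{X^n_s,1}\,ds$. By the mass control Proposition \ref{pro:masscontrol} and the uniform initial moment hypothesis, $\esp{\crochet{M^{n,h}}_t}=O(1/n)$, so by Doob's inequality $\sup_{t\le T}|M^{n,h}_t|\to 0$ in $L^2$.

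Next I would prove tightness of $(X^n)_n$ in $\mathbb{D}([0,T],\Mrond_F(E))$. Exploiting the density of $\dbphi$ in $C_0(E)$ (hypothesis 3), by a standard tightness criterion for measure-valued càdlàg processes (Roelly/Jakubowski) it suffices to establish a compact containment condition together with tightness of the real processes $(\crochet{X^n,h})_n$ for $h\in\dbphi$. For the latter I would apply the Aldous--Rebolledo criterion: the increments of both the finite variation part and the bracket above, over a small random interval, are bounded in expectation by integrals of $\crochet{X^n_s,1}$ and $\crochet{X^n_s,1}^2$, which are controlled uniformly in $n$ by Proposition \ref{pro:masscontrol} (this is where the higher moment $p\ge 3$ enters, to absorb the quadratic total rate bound in the moment propagation and in the Aldous estimates). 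The uniform bound on $\crochet{X^n_t,1}$ also delivers the compact containment. The hard part will be precisely this tightness step: obtaining the Aldous--Rebolledo control of the jump bracket uniformly in $n$ under the merely quadratic rate bound, and establishing compact containment when $E$ is only locally compact rather than compact.

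Finally I would identify the limit and conclude. Along any convergent subsequence with limit $\xi$, I pass to the limit in the decomposition: the martingale vanishes by the $L^2$ bound above; $s\mapsto\crochet{\xi_s,A_\varphi h}$ is continuous since $A_\varphi h$ is bounded and \pac; and, crucially, the nonlinear interaction functional $m\mapsto\sum_i\int m(dx)\,\alpha_i(m,x)\crochet{k_i(m,x),h}$ is continuous in total variation by the Lipschitz bound of Assumption \ref{ass:regular}. Because the jumps of $X^n$ have size $O(C_k/n)\to 0$, the limit $\xi$ is continuous and satisfies \eqref{eq:integrodifxit}. The uniqueness Proposition \ref{pro:uniqueness} then forces all subsequential limits to coincide, so the full sequence converges in distribution; as the limit is deterministic, this upgrades to convergence in probability. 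Time-dependent test functions $h(t,x)$ as in \eqref{eq:integrodifxit} are handled by the same decomposition applied to $F(t,m)=\crochet{m,h(t,\cdot)}$, which contributes the extra $\partial_t h$ term.
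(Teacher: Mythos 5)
Your overall scheme is the same as the paper's: the PDMP generator/semimartingale decomposition with the $1/n$ cancellation, an $O(1/n)$ bound on the martingale bracket, reduction of tightness via density of $\dbphi$ in $C_0(E)$ (Roelly) to the real-valued processes $\crochet{X^n,h}$ handled by Aldous--Rebolledo, identification of subsequential limits through the total-variation Lipschitz continuity of the interaction functional, and conclusion via the uniqueness Proposition \ref{pro:uniqueness} together with the observation that a deterministic limit upgrades convergence in distribution to convergence in probability. Two points in your write-up, however, need repair.

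First, your bracket estimate invokes ``$\alpha_i\le C_2$'' as if it were a global bound. In Assumption \ref{ass:regular} the constant $C_2$ depends on the centre $m$ and the radius $r$ of the TV-ball: it bounds $\alpha_i$ only on $B(m,r)$, and $X^n_s$ does not remain in any fixed ball. The correct estimate, and the one the paper uses, goes through the total rate function:
$\sum_i\int m(dx)\,\alpha_i(m,x)\,\crochet{k_i(m,x),h}^2 \le C_k^2\,\norme{h}_\infty^2\, q(m) \le C_k^2\,\norme{h}_\infty^2\, C'_q\bigl(1+\crochet{m,1}+\crochet{m,1}^2\bigr)$,
which is quadratic, not linear, in the mass. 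This is precisely why the hypothesis $p\ge 3$ and Lemma \ref{lem:unifmasscontrol} (which controls $\esp{\sup_{t\le T}\crochet{X^n_t,1}^q}$ for $q\le (p+1)/2$, hence allows $q=2$) are required; with them, your conclusions $\esp{\crochet{M^{n,h}}_T}=O(1/n)$ and the Aldous--Rebolledo increments bounds go through unchanged. Incidentally, no separate compact containment condition is needed: for the vague topology on a locally compact $E$, the criterion of \cite{roelly86} only requires tightness of the projections $\crochet{X^n,h}$, and sets of bounded total mass are vaguely relatively compact.

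Second, what this tightness argument delivers is convergence in $\mathbb{D}([0,T],(\Mrond_F(E),v))$, i.e.\ for the vague topology; since $E$ is only locally compact, this is strictly weaker than the claimed convergence for the weak topology --- mass could escape to infinity. The paper closes this gap in its Step 3: apply the identification step to the test function $h=1\in\dbphi$ to get convergence in distribution of the total-mass processes $(\crochet{X^n_t,1})_{t\le T}$ to the continuous deterministic limit $(\crochet{\xi_t,1})_{t\le T}$, and then invoke the criterion of \cite{meleardroelly93} to upgrade vague convergence to convergence in $\mathbb{D}([0,T],(\Mrond_F(E),w))$. Your proposal omits this step, so as written it proves a weaker statement than the theorem asserts.
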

\begin{remark}
  Of course, the scaling assumption may be replaced by a convergence
  assumption such as
  $$ \lim_{n\to +\infty} \alpha_i^{(n)}(nm,x)=\alpha_i(m,x)\,,\quad
  \lim_{n\to +\infty} k_i^{(n)}(nm,x)= k_i(m,x)\,.$$
  We have encountered no need for such a generality in our studies,
  and thus we leave the generalisation to an interested reader.

\end{remark}

\section{Large Population Limit : the associated PDE}
\newcommand{\barx}{\bar{\Xrond}}

In order to show that equation \eqref{eq:integrodifxit} may be seen as
a PDE, we need some absolute continuity assumption. We shall need more
structure on the state space (what we require looks a lot like the
framework used in \cite{Benleb15}).

The state space $E$ is the union $E= E_1 \times E_2$, with $E_1$ a
finite set (of compartments) and $E_2 = \cup_{i\in I_2} \ens{i}
\times \barx_i$ the finite union of compartments with a
continuous trait : $\barx_i$ is the closure of an open connected set
$\Xrond_i$ of $\R^{d_i}$.

Let $(F^i)_{i\in I_2}$ be smooth vector fields $F^i: \barx_i \to
\R^{d_i} $ and $\varphi^i=(\varphi^i_t)_{t\in \R}$ be  the flow induced by
them : $t \to \varphi^i_t(x)$ is the solution of
$$ \dot{x} = F^i(x)$$
with initial condition $x(0)=x$.

The dynamical system on $E$ is then $\varphi_t(x) = x$ if $x \in E_1$ and
$\varphi_t(i,x)= \varphi^i_t(x) $ for $ (i,x) \in E_2$. The domain
$\dbphi$ contains bounded functions $h : E\to \R$ such that for any
$i\in E_2$, $x\to h(i,x) \in C^1(\barx_i)$, and 
 $ x \to \nabla h(i,x)  F^i(x)$
bounded. For these functions 

$$A_\varphi h(x)=0 \text{  if }x \in E_1\quad\text{ and } A_\varphi (i,x) = \nabla
h(i,x)  F^i(x)\text{ for }(i,x) \in E_2\,.$$

 Hence $\dbphi$ is dense in $C_0(E)$ since it contains
constant functions and functions $h$ s.t. for any $i\in I_2$, $x \to
h(i,x) \in C^\infty_K(\Xrond_i)$ (functions $C^\infty$ with compact support).

The reference measure on $E$ will be
$$ \lambda = \sum_{x \in E_1} \delta_x + \sum_{i\in I_2} \delta_i
\otimes Leb(\barx_i)$$

It should be clear then that for any $t$,the  image $\varphi_t\sharp \lambda$ is
      absolutely continuous with respect to $\lambda$:
      $$ \varphi_t\sharp \lambda \ll \lambda \qquad(\forall t\in
      \R)\,.$$

      Indeed this is true on every compartment in $E_1$ and in $\ens{i}
      \times \barx_i$ we just need to use the jacobian of the flow.
Furthermore, the dual of $A_\varphi$ in the sense of distributions
is $$A_\varphi^*(m) = - \sum_{i\in I_2} div_i(m(i,x) F^i(x))$$
with $div_i$ the divergence in the sense of distribution on each $\Xrond_i$.

\begin{theorem}\label{thm:edptrans}
  Let $(\xi_t)_{i\in\etc{0,T}}$ be a solution of
  \eqref{eq:integrodifxit} with $(\mu,\varphi)$ satisfying the
  Assumptions \ref{ass:growth}, \ref{ass:regular}.
  
  Assume furthermore that
  \begin{itemize}
    \item $\xi_0 \ll \lambda$.
    \item for every $i,m,x$ $k_i(m,x)$ is absolutely continuous with
      respect to $\lambda$ with density say $k_i(m,x,z)$ a measurable
      positive function defined on $\Mrond_F(E)\times E\times E$.
  \end{itemize}
 
  Then for any $t\in[0,T]$, $\xi_t \ll \lambda$ and the density
  $\xi(t,x)$ satisfy in the weak sense the PDE
  \begin{align*}
    \partial_t \xi(t,x) - A_\varphi^*(\xi_t)(x) = -
    \int \xi(t,x') \sum_i \alpha_i(\xi_t,x') k_i(\xi_t,x',x) \lambda(dx')\,.
   \end{align*}
\end{theorem}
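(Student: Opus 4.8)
The plan is to establish a mild (Duhamel) representation of the given solution $(\xi_t)$ that exhibits it as a superposition of push-forwards along the flow of absolutely continuous measures; absolute continuity of $\xi_t$ will then follow because $\varphi_t$ is a diffeomorphism on each continuous-trait component, and the weak PDE will be obtained by rewriting \eqref{eq:integrodifxit} in terms of densities via Fubini's theorem together with the distributional identity $\crochet{m,A_\varphi h}=\crochet{A_\varphi^*m,h}$ that defines $A_\varphi^*$.

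First I would introduce the source measure $\Psi(m)(dz):=\sum_i\int_E m(dx)\,\alpha_i(m,x)\,k_i(m,x,z)\,\lambda(dz)$. By the second hypothesis each $k_i(m,x,\cdot)$ is a $\lambda$-density, so $\Psi(m)\ll\lambda$; combining the bound $\sup_{i,x}\alpha_i(m,x)\le C_2$ of Assumption \ref{ass:regular} with the mass control of Proposition \ref{pro:masscontrol} gives $\norme{\Psi(\xi_s)}_{TV}\le C\crochet{\xi_s,1}$, which stays bounded on $[0,T]$, so $s\mapsto\Psi(\xi_s)$ is Bochner-integrable in $(\Mrond_S(E),\norme{\cdot}_{TV})$. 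Fix $t$ and $h\in\dbphi$ and set $g(s):=\crochet{\xi_s,h\circ\varphi_{t-s}}=\crochet{\varphi_{t-s}\sharp\xi_s,h}$. The function $u(s,\cdot):=h\circ\varphi_{t-s}$ lies in $\dbphi$ for each $s$, since $\dbphi$ is stable under composition with the flow with $A_\varphi(h\circ\varphi_\tau)=(A_\varphi h)\circ\varphi_\tau$, and it is $C^1$ in $s$ with $\partial_s u=-A_\varphi u$. Applying the weak formulation \eqref{eq:integrodifxit} to the time-dependent test function $u$, the transport contribution cancels because $\partial_s u+A_\varphi u=0$, leaving $g'(s)=\crochet{\Psi(\xi_s),h\circ\varphi_{t-s}}=\crochet{\varphi_{t-s}\sharp\Psi(\xi_s),h}$. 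Integrating from $0$ to $t$ yields the mild formula \[ \xi_t=\varphi_t\sharp\xi_0+\int_0^t\varphi_{t-s}\sharp\Psi(\xi_s)\,ds. \]

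Next I would read off absolute continuity. On each component $\ens{i}\times\barx_i$ the map $\varphi^i_t$ is a diffeomorphism (the flow of the smooth field $F^i$, with inverse $\varphi^i_{-t}$), while $\varphi_t$ is the identity on $E_1$; hence for any $\nu\ll\lambda$ the push-forward $\varphi_t\sharp\nu$ is again $\ll\lambda$, its density being $\tfrac{d\nu}{d\lambda}\circ\varphi_{-t}$ times the Jacobian $|\det D\varphi_{-t}|$ on the continuous components and unchanged on $E_1$. Consequently $\varphi_t\sharp\xi_0\ll\lambda$ by the first hypothesis and $\varphi_{t-s}\sharp\Psi(\xi_s)\ll\lambda$ for every $s$. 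Since the subspace of $\lambda$-absolutely continuous measures is closed in total variation (Vitali--Hahn--Saks), it is stable under the Bochner integral, so the mild formula forces $\xi_t\ll\lambda$; write $\xi(t,\cdot)$ for its density. Rewriting \eqref{eq:integrodifxit} in terms of densities then gives the PDE: for an admissible test function $h(t,x)=\psi(t)g(x)$ with $\psi\in C^1_c(0,T)$ and $g\in\dbphi$, the transport term is handled by the definition of the distributional dual, $\crochet{\xi_t,A_\varphi g}=\crochet{A_\varphi^*\xi_t,g}$, while Fubini's theorem rewrites $\sum_i\int\xi_t(dx)\,\alpha_i(\xi_t,x)\crochet{g,k_i(\xi_t,x,\cdot)}$ as $\int g(x)\big(\sum_i\int\xi(t,x')\alpha_i(\xi_t,x')k_i(\xi_t,x',x)\,\lambda(dx')\big)\lambda(dx)$; integrating by parts in time against $\psi$ and letting $g$ range over the dense subclass $\dbphi$ produces the announced weak equation for $\xi(t,x)$.

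The main obstacle is the rigorous justification of the mild formula: one must differentiate the push-forward pairing $s\mapsto\crochet{\varphi_{t-s}\sharp\xi_s,h}$, which requires checking that $h\circ\varphi_{t-s}$ is a genuine time-dependent test function for \eqref{eq:integrodifxit} (stability of $\dbphi$ under the flow, including the behaviour near the boundary of $\barx_i$) and that the pointwise identity $\partial_s u+A_\varphi u=0$ holds. The remaining delicate point is the change-of-variables/Jacobian estimate ensuring that the flow push-forward preserves absolute continuity on each $\barx_i$; once these two facts are secured, both the preservation of absolute continuity and the passage to the PDE are routine.
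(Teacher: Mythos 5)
Your proposal is correct and follows essentially the same route as the paper: the mild (Duhamel) formula you obtain by testing \eqref{eq:integrodifxit} against $h\circ\varphi_{t-s}$ is exactly the paper's equation \eqref{eq:xitk}, the preservation of absolute continuity rests on the same Jacobian/diffeomorphism property of the flow (the paper phrases it dually, as $\varphi_t\sharp\lambda\ll\lambda$, testing against indicators $1_A$ of $\lambda$-null sets, which lets it avoid your Bochner-integral and Vitali--Hahn--Saks machinery), and the concluding Fubini-plus-adjoint step is identical. The only distinction is this primal (measure-side) versus dual (test-function-side) packaging of the same argument, so no genuinely different method is involved.
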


\section{Proofs of large population limits}

\subsection{Proof of Proposition \ref{pro:masscontrol}}
\begin{proof}
  We shall follow closely the proof of \cite[Theorem
  3.1]{fourniermeleard04}.
  Given $a>0$ we let $\tau_a:=\inf\ens{t\ge 0 : \crochet{\nu_t,1} \ge
    a}$ and $f(m)=\crochet{m,1}^p$. Observe that
  $$A_\phi f(m) = p \crochet{m,1}^{p-1} \crochet{m,A_\varphi(1)} =0\,.$$
Therefore,
  \begin{align*}
    0 \le L^\nu f(m) &= \sum_i \int m(dx) \alpha_i(m,x)
    \etp{(\crochet{m+k_i(m,x),1})^p - \crochet{m,1}^p} \\
    &= \sum_{i \in \bar{I}} \int m(dx) \alpha_i(m,x)
    \etp{(\crochet{m+k_i(m,x),1})^p - \crochet{m,1}^p} \\
    &\le \sum_{i \in \bar{I}} \int m(dx) \alpha_i(m,x) C_p \crochet{m,1}^{p-1}\,,
  \end{align*}
  where $C_p>0$ satisfies
  $$ (x+C_k)^p - x^p \le C_p (1 + x^{p-1}) \qquad(x\ge 0)\,.$$
  Hence,
  \begin{align*}
    0 \le L^\nu f(m) &\le C_p \sum_{i\in \bar{I}} \crochet{m,1}^{p-1}
    q_i(m)\\  &\le   C_p C_q \sum_{i\in \bar{I}} \crochet{m,1}^{p-1}
    (1+\crochet{m,1})\\
    &\le C (1 + \crochet{m,1}^p)\,.
  \end{align*}
  Therefore the martingale $M^f_t$ in the decomposition
  \eqref{eq:semidecpdmp} is such that $M^f_{t\wedge \tau_a}$ is a true
  martingale, and
  \begin{align*}
    \esp{f(\nu_{t\wedge \tau_a})} &= \esp{f(\nu_0)} +
    \esp{\int_0^{t\wedge \tau_a} L^\nu f(\nu_s)\, ds } \\
    &\le \esp{\crochet{\nu_0,1}^p} + C' \int_0^t
    \esp{1+\crochet{\nu_{s\wedge \tau_a},1}^p}\, ds\,.
  \end{align*}
  Gronwall's Lemma ensures then the existence of a constant $C$, not
  depending on $a$, such
  that for every $t>0$
  \begin{equation}\label{eq:gronmaspuisp}
    \esp{1+\crochet{\nu_{t\wedge \tau_a},1}^p} \le C e^{Ct}\,.
  \end{equation}
  This implies that $\lim_{a\to +\infty}\tau_a=+\infty$ almost surely, that is
  non explosion. Then, taking limits in \eqref{eq:gronmaspuisp},
  yields the desired upper bound.

\end{proof}
\xcom{Compléments sur la preuve: si on n'a pas $\tau_\infty = \lim
  \tau_a = +\infty $ ps , alors il existe $t_0$ tel que
  $\prob{\tau_\nfty < t_0} >0$ et alors
  \begin{multline*}\esp{\crochet{\nu_{t_0 \wedge \tau_a}, 1} } \\
    \ge \esp{\crochet{\nu_{t_0 \wedge \tau_a}, 1} \un{\tau_\infty <
        t_0}}\\
    \ge  \esp{\crochet{\nu_{\tau_a}, 1} \un{\tau_\infty <
        t_0}} \ge (a-C_k) \prob{\tau_\infty < t_0} 
  \end{multline*}
  et on fait $a \to +\infty$ pour obtenir une contradiction
  }
  \subsection{Study of equation \eqref{eq:integrodifxit}}
\begin{proposition}[Uniqueness] \label{pro:uniqueness}Assume that  the kernel
  $\mu$ satisfy Assumption \eqref{ass:regular} and that bounded \pac
  functions are dense in bounded functions. Given $m_0 \in \Mrond_F(E)$, there is
  at most only one solution  $(\xi_t)_{t\in[0,T]}$  of
  \eqref{eq:integrodifxit} that satisfies $\xi_0=m_0$.
\end{proposition}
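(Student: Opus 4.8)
The plan is to show that two solutions must coincide by deriving a Gronwall-type estimate on the total variation distance between them. Suppose $(\xi_t)$ and $(\eta_t)$ are two solutions of \eqref{eq:integrodifxit} with the same initial condition $\xi_0 = \eta_0 = m_0$. First I would test the equation against a fixed time-independent function $h \in \dbphi$ (so $\partial_t h = 0$), obtaining for each such $h$ the integral form
\[
\crochet{\xi_t - \eta_t, h} = \int_0^t \Big( \crochet{\xi_s - \eta_s, A_\varphi h} + \sum_i \int \big( \xi_s(dx)\alpha_i(\xi_s,x) - \eta_s(dx)\alpha_i(\eta_s,x)\big)\crochet{h, k_i(\cdot,x)} \Big)\, ds\,.
\]
The goal is to bound $\norme{\xi_t - \eta_t}_{TV} = \sup_{\norme{h}_\infty \le 1} \crochet{\xi_t - \eta_t, h}$, so I would want every term on the right to be controlled by $\int_0^t \norme{\xi_s - \eta_s}_{TV}\, ds$ times a constant, uniformly over the unit ball of test functions.

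The core estimate comes from Assumption \ref{ass:regular}. I would rewrite the jump term by splitting it into a piece where the kernel is evaluated at $\xi_s$ versus $\eta_s$ and a piece where the measure is $\xi_s$ versus $\eta_s$:
\[
\int \xi_s(dx)\,\alpha_i(\xi_s,x)\crochet{h, k_i(\xi_s,x)} - \int \eta_s(dx)\,\alpha_i(\eta_s,x)\crochet{h, k_i(\eta_s,x)}\,.
\]
Adding and subtracting $\int \xi_s(dx)\,\alpha_i(\eta_s,x)\crochet{h, k_i(\eta_s,x)}$, the first difference is controlled by the Lipschitz bound $\norme{\alpha_i(\xi_s,x)k_i(\xi_s,x) - \alpha_i(\eta_s,x)k_i(\eta_s,x)}_{TV} \le C_1 \norme{\xi_s - \eta_s}_{TV}$ integrated against $\xi_s$ (whose mass is bounded via Proposition \ref{pro:masscontrol}, or directly since a solution stays in $\Mrond_F(E)$), and the second difference is controlled by $\sup_x \alpha_i(\eta_s,x) \le C_2$ times $\norme{\xi_s - \eta_s}_{TV}$, using $\abs{\crochet{h, k_i}} \le \norme{h}_\infty \norme{k_i}_{TV} \le C_k$ from Assumption \ref{ass:growth}. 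The transport term $\crochet{\xi_s - \eta_s, A_\varphi h}$ is bounded by $\norme{A_\varphi h}_\infty \norme{\xi_s - \eta_s}_{TV}$, which is finite by the definition of $\dbphi$, but here I must be careful: $\norme{A_\varphi h}_\infty$ is \emph{not} uniformly bounded over $\ens{\norme{h}_\infty \le 1}$.

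This last point is the main obstacle, and it is exactly why the density hypothesis in the statement matters. The supremum defining the total variation norm should be taken over $\dbphi$-functions, but $A_\varphi h$ need not be small when $h$ is small in sup-norm, so the transport term cannot be absorbed into the Gronwall bound for arbitrary $h$. My plan to circumvent this is to choose test functions that are \emph{invariant} under the flow along the integration, namely to use $h_s(x) = h(\varphi_{t-s}(x))$ for a fixed terminal $h$; then $\partial_s h_s + A_\varphi h_s = 0$ by the semigroup property (this is the characteristics/dual-semigroup trick), killing the transport contribution entirely. With $h_s$ in place of a static $h$, the left side becomes $\crochet{\xi_t - \eta_t, h}$ at the terminal time while only the jump terms survive on the right, and since $\norme{h_s}_\infty = \norme{h}_\infty$ the estimate is now uniform over the unit ball. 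Using the density of $\dbphi$ in $C_0(E)$ (equivalently the stated density of \pac functions) to pass to the supremum over all $\norme{h}_\infty \le 1$, I obtain $\norme{\xi_t - \eta_t}_{TV} \le C \int_0^t \norme{\xi_s - \eta_s}_{TV}\, ds$, and Gronwall's Lemma forces $\xi_t = \eta_t$ for all $t \in [0,T]$.
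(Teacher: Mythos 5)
Your proposal is correct and follows essentially the same route as the paper's own proof: the paper likewise replaces static test functions by $h(s,x)=g(\varphi_{t-s}(x))$ solving $\partial_s h + A_\varphi h = 0$ to eliminate the transport term, performs the same add-and-subtract splitting of the jump term controlled by the constants $C_1$, $C_2$, $C_k$ of Assumptions \ref{ass:regular} and \ref{ass:growth}, then uses the density of bounded \pac functions to upgrade the estimate to a total-variation bound and concludes by Gronwall. The only detail the paper makes explicit that you pass over quickly is the use of the continuity of $t\mapsto\xi_t$ to keep both solutions in a fixed ball $B(m_0,r)$ on $[0,T]$, which is precisely what legitimizes the local constants $C_1,C_2$ of Assumption \ref{ass:regular} and the uniform mass bound in your Lipschitz estimate.
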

\begin{proof}
Assume that $\xi,\xi'$ are two solutions with the same initial value
$\xi_0=\xi_0'=m_0$. Fix $t>0$. Let $g:E \to \R$ measurable bounded
\pac for $\varphi$, and let
$h(s,x)=g(\varphi_{t-s}(x))$. Then, by definition of the generator
$A_\varphi$ of the dynamical system $\varphi$, $h$ is a solution of
$$ \partial_s h + A_\varphi(h) = 0\,,\quad h(t,x) = g(x)\,.$$
Therefore, injecting this into equation \eqref{eq:integrodifxit} yields
\begin{equation}\label{eq:xitk}  \crochet{\xi_t,g} =  \crochet{m_0,h(0,.)} + \sum_i \intot
  F_i(s,\xi_s)\, ds
\end{equation}

with
$$ F_i(s,m) = \int m(dx) \alpha_i(m,x) \crochet{k_i(m,x),h(s,.)}\,.$$

By continuity of $t\to \xi_t$ and $t\to \xi'_t$, we can chose $r>0$
such that for all $t\in\etc{0,T}$, $\xi_t$ and $\xi'_t$ are in
$B(m_0,r)$.

Observe that  Assumptions \eqref{ass:growth} and \eqref{ass:regular} implies that for
$m,m'$ in $B(m_0,r)$
\begin{align}
  \valabs{F_i(s,m) -F_i(s,m')} &\le \int  m(dx) \valabs{\int
    (\alpha_i(m,x) k_i(m,x,dz)
    -\alpha_i(m',x) k_i(m',x,dz)) h(s,z)} \notag\\
  &+ \int (m(dx)-m'(dx)) \alpha_i(m',x) \valabs{\int h(s,z)k_i(m',x,dz)} \notag\\
  &\le  \norme{g}_\infty \int  m(dx)
  \norme{\alpha_i(m,x) k_i(m,x)-\alpha_i(m',x)k_i(m',x)}_{TV} \notag\\  &+\norme{g}_\infty
  \norme{m-m'}_{TV} \sup_x \alpha_i(m',x)
  \norme{k_i(m',x)}_{TV}\notag\\
  &\le \norme{g}_\infty  \norme{m-m'}_{TV} (C_1 \crochet{m,1} +
  C_kC_2) \notag\\
  & \le C \norme{g}_\infty  \norme{m-m'}_{TV} \,. \label{eq:lipFM}
\end{align}
 We  obtain that for a constant
 $C$ that does not depend on $g$ nor $T$, for all $t\in\etc{0,T}$,
 $$  \valabs{\crochet{\xi_t,g}- \crochet{\xi'_t,g}} \le C
 \norme{g}_\infty \int_0^t \norme{\xi_s -\xi'_s}_{TV}\, ds\,.$$

 Since $g$ is arbitrary,  this implies that
 $$ \norme{\xi_t -\xi'_t}_{TV} \le C \int_0^t \norme{\xi_s
   -\xi'_s}_{TV}\, ds$$
 and we conclude by Gronwall's Lemma that for all $t\in \etc{0,T}$,
 $ \xi_t=\xi'_t$.
\end{proof}
\subsection{Proof of Theorem \ref{thm:llngsir}}
 We shall follow closely the lines (and the arguments) of the proof of
  \cite[Theorem 5.3]{fourniermeleard04}. Since we have already
  established uniqueness of the limit equation in
  Proposition~\ref{pro:uniqueness}, we shall establish 
  \begin{description}
    \item[Step 1] Tightness of the family of distributions of $X^n$ in
    $\mathbb{D}(\etc{0,T}, (\Mrond_F(E),v))$ (that is when
    $\Mrond_F(E)$ has the vague topology).
    \item[Step 2] Show any limit in distribution satisfies the
    limiting equation
    \item[Step 3] Convergence in distribution in
    $\mathbb{D}(\etc{0,T}, (\Mrond_F(E),w))$ (with the topology of
    weak convergence) to the unique solution of the limiting equation.
  \end{description}

  \subsubsection*{Step 1} Since $\dbphi$ is dense in $C_0(E)$ the set
  of continuous functions with a limit at infinity, according to
  \cite{roelly86}, it is enough to prove that for any $h\in \dbphi$
the process $\crochet{X^n,h}$ is tight in
$\mathbb{D}(\etc{0,T},\R)$. We shall show first that
$$ \sup_n \esp{\sup_{t\in \etc{0,T}} \valabs{\crochet{X^n,h}}} <
+\infty$$
This is a direct consequence of the boundedness of $h$ and of
Lemma~\ref{lem:unifmasscontrol}.

Hence, according to Aldous criterion \cite{aldous89} and the Rebolledo
criterion of \cite[Corollary 2.3.3]{joffemetivier86}, it suffices to   prove the
tightness of the  martingale part and the drift part of
$\crochet{X^n,h}$. More precisely,  we only need to prove that for
the decomposition of $f(X^n_t)=\crochet{X^n_t,h}$:
$$  f(X^n_t) = f(X^n_0) + M^{f,n}_t + U^{f,n}_t$$
we have for a constant $C_{h,T}$ depending only on $h$ and $T>0$, that
for every stopping times $ 0 \le S \le S' \le S+\delta \le T$ , and
every $n\ge 1$,
we have
\begin{align*}
  \esp{\valabs{U^{f,n}_{S'} -  U^{f,n}_{S}}} &\le C_{h,T} \delta\,,\\
   \esp{\valabs{\crochet{M^{f,n}}_{S'} - \crochet{M^{f,n}}_{S}}} &\le C_{h,T} \delta\,.
 \end{align*}
 Observe first that $U^{f,n}_t = \int_0^t L^{X^n} f(X^n_s)\, ds$ and
 that, thanks to scaling,
 \begin{align*}
   L^{X_n} f(m) &= L^{\nu_n} f(\unsur{n }.) (nm) \\
   &= \crochet{m, A_\varphi(h)} + \int n m(dx) \sum_i
   \alpha_i^{(n)}(nm,x) \crochet{k_i^{(n)}(nm,x), \unsur{n}h} \\
   &= \crochet{m, A_\varphi(h)} + \int  m(dx) \sum_i
   \alpha_i(m,x) \crochet{k_i(m,x), h}\,.
 \end{align*}
 Therefore,
 \begin{align*}
   \valabs{ L^{X_n} f(m)} &\le \crochet{m,1} \norme{A_\varphi(h)}_\infty
   + \norme{h}_{\infty} \sum_i \int m(dx) \alpha_i(m,x) \norme{k_i(m,x)}_{TV} \\
   &\le  \crochet{m,1} \norme{A_\varphi(h)}_\infty+ \norme{h}_{\infty}
   C_k q(m) \\
   &\le  \crochet{m,1} \norme{A_\varphi(h)}_\infty+ \norme{h}_{\infty}
   C'_q(1 + \crochet{m,1} + \crochet{m,1}^2) \\
    &\le C_h (1 + \crochet{m,1}^2)\,.
\end{align*}
Consequently, by Lemma~\ref{lem:unifmasscontrol}, since $p\ge 3$,
\begin{align*}
  \esp{\valabs{U^{f,n}_{S'} -  U^{f,n}_{S}}}&\le C_h \delta \esp{
    \sup_{s\le T} (1+\crochet{X^n_s,1}^2)}  \le C_{h,T} \delta\,.
\end{align*}
Similarly,
$$ \crochet{M^{f,n}_t } = \int_0^t (L^{X^n} f^2 - 2 f L^{X^n}
f)(X^n_s)\, ds\,,$$
and similarly  we have the bound
\begin{align*}
  (L^{X^n} f^2 - 2 f L^{X^n} f)(m) &=  \int  n\,m(dx) \sum_i
   \alpha_i(m,x) \crochet{k_i(m,x), \unsur{n}h}^2\\
  &\le \unsur{n} C_k^2 \norme{h}_{\infty}^2 q(m) \\
  &\le \unsur{n} C_h (1 + \crochet{m,1}^2)\,.
\end{align*}
Therefore we obtain,
\begin{align*}
  \esp{\valabs{\crochet{M^{f,n}}_{S'} - \crochet{M^{f,n}}_{S}}}&\le
  \delta C_h \esp{\sup_{s\le T} (1+\crochet{X^n_s,1}^2)}  \le C_{h,T} \delta\,.
\end{align*}
\subsection*{Step 2}
Let $(X_t)_{t\in\etc{0,T}}$ be the limit in distribution in
  $\mathbb{D}(\etc{0,T}, (\Mrond_F(E),v))$ of a subsequence
  $X^{\kappa(n)}$. By construction, almost surely,
  $$ \sup_{t\in\etc{0,T}} \sup_{h\in L^\infty(E),\norme{h}_{\infty}\le
  1} \valabs{ \crochet{X^n_t,h} - \crochet{X^n_{t-},h}} \le
\frac{2}{n}\,.$$

Therefore $X$ is almost surely strongly continuous. Let $h:\R_+\times E \to
\R$, such that for fixed $x$, $t\to h(t,x)$ is $C^1$ and for every
$t$, $x\to h(x,t)\in \dbphi$.

 For any measured valued function
$m\in C([0,T],\Mrond_F(E))$ we let
\begin{align*}
 \psi(m) &= \crochet{m_t,h} -\crochet{m,h} - \int_0^t ds
\crochet{m_s,\partial_s h(s,.) + A_\varphi(h)(s,.)} \\
&- \int_0^t\, ds\, \int m_s(dx) \sum_i \alpha_i(m_s,x) \crochet{k_i(m_s,x,.),h(s,.)} \,.
\end{align*}
We are going to show that 
$$\esp{\valabs{\psi(X)}}=0\,.$$
Observe that for $f(s,m)=\crochet{m,h(s,.)}$ we have 
$$ M^{f,n}_t = \psi(X^n)\,,$$
where we have the semi martingale decomposition
$$ f(t,X^n_t) = f(0,X_0) + M^{f,n}_t + U^{f,n}_t$$
with $$ U^{f,n}_t = \intot (\partial_s f(s,X^n_s) +
L^{X^n}f(s,.)(X^n_s))\, ds\,.$$

The preceding computations applied to stopping times $S=0$
and $S'=t$ yield
$$\esp{\psi(X^n)^2}=\esp{(M^{f,n}_t)^2}= \esp{\crochet{ M^{f,n}}_t} = \int_0^t \etp{ L^{X_n}(f^2(s,.)) - 2 f
L^{X_n}(f(s,.))}(X^n_s)\,ds \le C_{h,T} \unsur{n}  \to 0\,.$$

We can prove as in the proof of Proposition \ref{pro:uniqueness} (see
inequality \eqref{eq:lipFM}) that
thanks to Assumption 2, $\psi$ is Lipschitz in the total variation norm:
$$ \valabs{\psi(m) -\psi(m')} \le C (\norme{A_\varphi(h)}_\infty +
\norme{\partial_s h}_\infty +
\norme{h}_\infty) \norme{m-m'}_{TV}\,.$$
Since $X$ is a.s. strongly continuous, this implies that $\psi$ is
a.s. continuous at $X$. Since $\psi(X^n)$ is bounded in $L^2$ it is
Uniformly Integrable, and we have
 $$ \esp{\valabs{\psi(X)}} = \lim_{n\to \infty}
\esp{\valabs{\psi(X^{\kappa(n)})}} = \lim_{n\to \infty}
\esp{\valabs{M^{f,\kappa(n)}_t}} = 0\,.$$
Therefore, $X=\xi$ the unique solution of equation \eqref{eq:integrodifxit}.

\subsection*{Step 3} The previous steps imply that $X^n$ converges in
distribution in $\mathbb{D}(\etc{0,T}, (\Mrond_F(E),v))$ to $(\xi_t)_{t\in[0,T]}$ the unique solution of
equation \eqref{eq:integrodifxit}.

If we apply Step 2 to the function $h=1 \in \dbphi$, we
obtain that $(\crochet{X_t^n,1})_{0\le t\le T}$ converges in
distribution to $(\crochet{\xi_t,1})_{0\le t\le T}$. Since this
limiting process is continuous,  a criterion proved in
\cite{meleardroelly93}  implies that this convergence holds in $\mathbb{D}(\etc{0,T}, (\Mrond_F(E),w))$.

\subsection{Proof of Theorem \ref{thm:edptrans}}
 Let $g=1_A$ with $\lambda(A)=0$ and  let
$h(s,x)=k(\varphi_{t-s}(x))$. 

Then for every $t\ge 0$, $k(\varphi_t(x))\ge 0$ and since
$\varphi_t\sharp\lambda \ll \lambda$, we have
$$ \int g(\varphi_t(x)) d\lambda(x) = \int g(y) d
\varphi_t\sharp\lambda(y) =0\,.$$
Therefore,$ g(\varphi_t(x))=0$, for $\lambda$ almost every $x$ and this
implies that $h(s,x) =0$ for $\lambda$ almost every $x$.

Let us examine  equation
\eqref{eq:xitk}. We obtain, since $\xi_0 \ll \lambda$ and $k_i(m,x)
\ll \lambda$,
\begin{align*}
  0 \le \crochet{\xi_t,g} &\le \crochet{\xi_0,g\circ \varphi_t}\\
  &+
\int_0^t  ds \sum_i \int \xi_s(dx) \alpha_i(\xi_s,x) \int_E
k_i(\xi_s,x,dz) h(s,z) =0\,.
\end{align*}

Therefore   $\xi_t \ll \lambda$. We let $\xi_t(dx) =
\xi(t,x) \lambda(dx)$.

Now given a function $h:E\to \R$, such that for every $i\in I_2$, $x\to
h(i,x) \in C^\infty_K(\Xrond_i)$, we manipulate  \eqref{eq:integrodifxit}
and use the adjoint operator to get
\begin{align*}
  \frac{d}{dt} \int \xi(t,x) h(x) d\lambda(x)  &= \int A_\varphi^*
  (\xi(t,.))(x) h(x) d\lambda(x) 
  \\
  &+ \sum_i \int d\lambda(x) \xi(t,x) \alpha_i(\xi_t,x) \int k_i(\xi_t,x,z) h(z)\,
  d\lambda(z) \\
  &= \int A_\varphi^*
  (\xi(t,.))(x) h(x) d\lambda(x) 
  \\
  &+  \int d\lambda(z) h(z) ) \int \sum_i  \xi(t,x) \alpha_i(\xi_t,x) \int k_i(\xi_t,x,z)   d\lambda(x)\,,
\end{align*}
which is exactly the desired weak sense PDE.

\section{Applications and Examples}

In this section we review the examples introduced in section
\ref{subsec:examples} and show how to verify the assumptions of Theorem~\ref{thm:llngsir}.
\subsection{Basic SIR (continuation of
\ref{subsec:basicsir})} 

The rate function is

$$ q(m) = \gamma I(m) + \beta I(m) S(m) \le C(\crochet{m,1} +
\crochet{m,1}^2\,.$$

Therefore Assumption \ref{ass:growth} is satisfied. Since
$I(m)=m(\ens{I})$ and $S(m)=m(\ens{S})$, Assumption \ref{ass:regular}
is also satisfied with $C_k=2$.

The scaling relation  requires that
$\mu^{(n)}$ is associated to $\gamma_n=\gamma$ and
$\beta_n=\frac\beta{n}$. We consider $X^n_t =\unsur{n} \nu^n_t$ where
$\nu_n$ is the SSMP driven by $\mu^{(n)}$. We shall just assume that $X^n_0=\unsur{n} \nu^n_0$ converges in
probability to $\xi_0$.

Therefore the
law of large numbers yields that  the limiting deterministic process
$(\xi_t)_{t\ge 0}$
satisfies for every $h$
\begin{align*}
  \frac{d}{dt} \crochet{\xi_t,h} &= \int \xi_t(dx)  (\gamma
  \un{x=I} (h(R) -h(I)) + \beta I(\xi_t) \un{x=S} (h(I) -h(S)) \\
  &=
 \gamma\, I(\xi_t)\, (h(R) -h(I)) + \beta\, I(\xi_t)\, S(\xi_t)\, (h(I) -h(S))\,.
\end{align*}

Therefore if $S(t)=S(\xi_t)$, $I(t)=I(\xi_t)$ and
$R(t)=R(\xi_t)$ taking $h(x)=\un{x=a} $ for $a\in \ens{S,I,R}$ yields
that $(S,I,R)$ satisfy the system

 \begin{equation}
   \label{eq:sysbasicsir}
   \left\{
     \begin{split}
       \frac{dS}{dt} &= -\beta S I\\
       \frac{dI}{dt} &= \beta S I - \gamma I\\
       \frac{dR}{dt} &= \gamma I\,.
     \end{split}
\right.
 \end{equation}

This is the classical system of ODE introduced by \cite{kermac27}.

\subsection{Age structured SIR (Continuation of Example
\ref{subsec:agessir})} \label{subsec:agessirplus}  We assume that the functions $\gamma$ and $\beta$
are bounded so that the total rate is quadratic at most:
$$ q(m) = q_1(m) + q_2(m) \le \norme{\gamma}_\infty \crochet{m,1} +
\norme{\beta}_\infty \crochet{m,1}^2\,.$$
The scaling relation  impose that
$\mu^{(n)}$ is associated to the functions $\gamma_n(a)=\gamma(a)$ and
$\beta_n(a) =\unsur{n} \beta(a)$.

The dynamical system has generator $A_\varphi h(R)=A_\varphi h(S) =0$
and $A_\varphi h(I,s) = \frac{d}{ds} h(I,s)$ and
$$\dbphi=\ens{h \text
  { bounded } : t\to h(I,t) \in C^1_b}\,.$$

The limiting process $(\xi_t)_{t\ge 0}$ satisfies for $h\in \dbphi$
\begin{align*}
  \frac{d}{dt} \crochet{\xi_t,h} &= \int \xi_t(dx) \un{c(x)=I}
  \frac{d}{dt} h(I,t) \\
  &+ \int  \xi_t(dx) \gamma(a(x)) \un{c(x)=I} (h(R) -h(x)) \\
  &+ \int  \xi_t(dx) \lambda(\xi_t)   \un{c(x)=S} (h(I,0) -h(x))\,.
\end{align*}
Therefore, if $S=S(t) =\xi_t({S})$ and $R(t)=\xi_t(\ens{R})$, taking
$h(x)=\un{c(x)=S}$ yields
$$ \frac{dS}{dt} = -\lambda(\xi_t) S_t$$
and with $h(x) = \un{c(x)=R}$, we get
$$ \frac{dR}{dt} = \int \gamma(a(x)) \un{c(x)=I} \xi_t(dx)\,.$$

If $h(x)= \un{c(x)=I} g(a(x))$ and if $\kappa_t$ is the image of the
restriction of $\xi_t$ to $\ens{i}\times[0,+\infty[$ by the function
$a(x)$, that is
$$ \int \xi_t(dx) \un{c(x)=I} f(a(x)) = \int_{\R_+} \kappa_t(da)
f(a)\,,$$
then we get
$$ \frac{d}{dt}\int \kappa_t(da) g(a) = \int g'(a) \kappa_t(da) + g(0)
S_t \lambda(\xi_t)$$

Assuming the absolute continuity of the initial conditions with
respect to Lebesgue measure
$\kappa_0(da) = i(0,a)\, da$, then we obtain, following the proof of
Theorem\ref{thm:edptrans}, that $\kappa_t(da) = i(a,t)\, da$ and therefore,
since $i(a,t)=0$ for $a<0$, 
$$ \frac{d}{dt}\int g(a) i(a,t)\, da = \int_0^{+\infty} g'(a) i(t,a)\,
da + g(0) S_t \lambda(\xi_t)$$
with
$$ \lambda(\xi_t) = \intof \beta(a) i(t,a)\,da\,.$$
A simple integration by parts, for $g \in C^1$ with support in a
compact $[0,M]$ proves that in a weak sens $i(t,a)$ is solution of
$$ \partial_t i(t,a) + \partial_a i(t,a) = 0\,,\qquad i(t,0) = S_t
\lambda(\xi_t)\,.$$
This is exactly the PDE system with boundary conditions derived by
Kermack and McKendrick, see \cite[section 1.5.2]{perthamebook07} or
\cite[section 13.2]{martchevabook15}.

\subsection{A simple host/pathogen interaction (continuation of \ref{subsec:hospath})}\label{subsec:hospathexample}
 We assume that at a
constant rate $\gamma>0$ an individual $x=(b,p)$ mutates to and
individual $x'=(b',p')$ with a density $\psi_x(x')$ with respect to
Lebesgue measure on $E=(0,+\infty)^2$. This mutation may just be an
injection of a random quantity of pathogens and a destruction of a
random quantity of immune cells. The  kernel is defined through
$$ \alpha(m,x)= \gamma\,,\quad k(m,x,dz) = \psi_x(z)\, dz$$
with $dz$ the Lebesgue measure. It has  constant rate function $q(m) = \gamma$. Then $\mu^{(n)}$ is also
   associated to $\gamma$ and $\psi$. Then, Theorem \ref{thm:edptrans}
   implies that the limit process has a density $\xi(t,x)$ that
   satisfy in a weak sense the PDE:
   $$ \partial_t \xi(t,x) + div(\xi(t,x) F(x)) = -\gamma \xi(t,x ) +
   \int_{(0,+\infty)^2} \psi_{x'}(x) \xi(t,x')\, dx'\,,$$
   with $F$ the smooth vector field
   $$F(x)F(p,b) =
   \begin{pmatrix}
     rp -bcp \\ abp
   \end{pmatrix}\,.$$

\subsection{The Bell Anderson model}
We assume boundedness of the rates so $q(m) \le C \crochet{m,1}$ and
Assumption 1 is satisfied. The regularity Assumption 2,  is then also
satisfied since the  rates do not depend on the population $m$ and
the total variation of $k_i$ is bounded. Scaling is also trivially
satisfied so that we obtain the limit equation
\begin{equation}\label{eq:mesbellanderson} \frac{d}{dt} \crochet{\xi_t,h} = \int_E \xi_t(dx) \etp{g(x) h'(x) -d(x)
  h(x) + b(x) (-h(x) + 2h(x/2))}\,.
\end{equation}
This is exactly the weak form of the PDE \cite[section
3.4]{MetzDiek86}
$$ \frac{\partial n}{\partial t} (t,x) + \frac{\partial}{\partial x}
(g(x) n(t,x)) = -d(x) n(t,x) - \beta(x) n(t,x) + 4 \beta(2x)
n(t,2x)\,.$$

Of course one may object that this PDE may only be inferred from
\eqref{eq:mesbellanderson} if we prove that $\xi_t(dx)$ has a density
$n(t,x)$ with respect to Lebesgue measure on $E$. With some additional
assumptions on $g$ this is however possible to establish. For example,
if one assumes that $g$  satisfies
$$2 g(x) =
g(2x)\,.$$
Comparison of ODE solutions yield that if
$h(s,x)=\gamma(\phi_{t-s}(x))$ we have $h(s,x) = 2 h(s,x/2)$ and thus
if $\xi_0 \ll \lambda$, and $\gamma(x)=1_A(x)$ with $\lambda(A)=0$,  we have, as in the proof of Theorem
\ref{thm:edptrans}
\begin{align*}
  0 \le \crochet{\xi_t,\gamma} \le \crochet{\xi_0,g\circ \varphi_t} -
  \intot ds \int \xi_s(dx) d(x) h(s,x) \le \crochet{\xi_0,g\circ
    \varphi_t} =0\,.
\end{align*}
And therefore $\xi_t \ll \lambda$.

\medskip
Another way to obtain rigorously a limit PDE is to change the
splitting mechanism as proposed in \cite[section 3.2]{MetzDiek86}:
$$k_2(m,x,dy) = -\delta_x(dy) + 2 \pi(x,y)\, dy$$
with $\pi(x,y)=\pi(x,x-y)$ and thus $\int \pi(x,y) y \, dy = x/2$.
   
\appendix
\section{Definitions and basic properties of PDMP} \label{sec:appendicepdmp}
A PDMP (Piecewise Deterministic Markov Process) is a Markov process
when its randomness is only given by a jump mechanism : between the
jump times the trajectories are deterministic (see
e.g. \cite{davispdmp84,davisbook93,jacobsenbook06})
Let us collect here some facts and results on PDMP  from the literature, mainly from
\cite{jacobsenbook06}.

The state space is a measurable space $(G,\Grond)$ (usually a Borel space
or a Polish space). We are given three ingredients:
\begin{itemize}
\item a \emph{rate function} $q: G \to \R_+$ measurable.
\item a \emph{probability transition kernel} on $G$, that is a
  measurable function $r:G \to \Prond(G)$ the set of probabilities on $G$
  endowed with the weak convergence topology. We shall write 
$ r(x,C) = r_x(C)$ for $C\in \Grond$ and also write $r(x,dx')$ for the
probability measure $r_x$. We assume $r(x,\ens{x})=0$. Sometimes
these two ingredients are joined by considering a \emph{kernel}
$\mu(x,C) := q(x) r(x,C)$, that is a measurable map $\mu : G \to
\Mrond_F(G)$ into the space of finite measures over $G$.
\item A \emph{continuous time dynamical system} on $G$, 
 that is a map
\begin{equation*}
\begin{aligned}
\phi : &\R^+ \times G &\to & G\\
&(t,x) &\to & \phi_t(x)
\end{aligned}
\end{equation*}
such that $\phi_t \circ \phi_s = \phi_{t+s}$ and $\phi_0=id$. We
assume that for each $x$, $t\to \phi_t(x)$ is continuous.
\end{itemize}

Given $x_0 \in G$ we construct a two sequences $(T_n){n\ge 0}$ and
$(Y_n)_{n\ge 0}$ by specifying the conditional laws:
\begin{itemize}
\item $T_0=0$ and $Y_0=x_0$.
\item The law of $T_{n+1}$ given $(T_k,Y_k)=(t_k,y_k), {0\le k\le n}$
  is given by
$$ \prob{T_{n+1} > t + t_n \mid (T_k,Y_k)=(t_k,y_k), {0\le k\le n}} =
\exp\etp{- \int_0^t q(\phi_s(y_n))\, ds}\,.$$
\item The law of $Y_{n+1}$ given $(T_k,Y_k)=(t_k,y_k), {0\le k\le n}$
  and $T_{n+1}=t_{n+1}$  is given by
$$ \prob{Y_{n+1} \in C\mid (T_k,Y_k)=(t_k,y_k), {0\le k\le
    n},T_{n+1}=t_{n+1}} = r(\phi_{t_{n+1}-t_n}(y_n),C)\,.$$
\end{itemize}
We assume \emph{stability} that is $T_n \to +\infty $ a.s. This is the
case if the 
rate is bounded : for all $y\in G$, $q(y) \le \bar{q}<
+\infty$. Indeed, then we have stochastic domination $\tau_i \prec
T_{i+1} - T_i$ where $\tau_i$ is IID exponential of parameter
$\bar{q}$.

Eventually we let
\begin{equation} X_t = \phi_{t-T_n}(Y_n) \,\quad\text{for}\quad T_n \le t <
T_{n+1}\,.
\end{equation}
Then  $(X_t)_{t\ge 0}$ is a strong homogeneous Markov process with respect to its
natural filtration $\Frond^X_t$ (see \cite[Theorem
7.2.1]{jacobsenbook06}), called a PDMP of parameter $(q,r,\phi)$
starting from $x_0$. We let $\PP_{x_0}$ denote the law of $X$.

Observe that if $\phi$ is the constant flow, then $(X_t)_{t\ge 0}$ is
an homogeneous continuous time Markov chain on $G$ with transition
kernel $p(x,dx') = q(x) r(x,dx')$.

Observe also that if the rate function $q$ is constant along the flow
of the dynamic system, that is $q(\phi_t(x))=q(x)$, then the
construction of the sequence may be simplified as (if $\Erond(a)$ denotes
the law of an exponential random variable of parameter $a$)
\begin{gather}
 {\Lrond}(T_{n+1}-T_n \mid T_1,Y_1, \ldots, T_n,Y_n) =
 {\Erond}(q(Y_n))\\
\Lrond(Y_{n+1} \mid T_1,Y_1, \ldots, T_n,Y_n,T_{n+1}) = r(\phi_{T_{n+1}-T_n}(Y_n),.)
\end{gather}

We also have an Itô formula and the definition of an associated
infinitesimal generator (see \cite[Theorem
7.6.1]{jacobsenbook06}).

A measurable function $h : G\to \R$ is \emph{path-continuous}
(resp. \emph{path-differentiable}) is for all $x$ the function
$$ t \to h(\phi_t(x))$$
is continuous (resp. differentiable). If $h$ is continuous it is of
course \pac. If it is \pdi, we define
$$ A_\phi h (y) := \lim_{s \to 0^+} \unsur{s} \etp{h(\phi_s(y)) -h
  (y)}\,,$$
and see that
$$ \frac{d}{dt} h(\phi_t(y)) = A_\phi h (\phi_t(y))\,.$$
If $t\to \phi_t(y)$ is differentiable, $h$ is $C^1$ and
$$ a(y) := \lim_{s \to 0^+} \unsur{s} \etp{\phi_s(y) -
  y}\,,$$
then $A_\phi h = \nabla h . a$.

We assume that for any bounded \pac function $h$ the function
\begin{equation}\label{eq:contqr}
   t \to q(\phi_t(y)) \int r(\phi_t(y),dz) h(z)
\end{equation}

is continuous for any $y$.

The \emph{full infinitesimal generator} for the PDMP is the linear
operator $L$ given by \eqref{eq:defgenpdmp} acting on the domain
$\Drond(L)$ of bounded measurable functions $h:G \to \R$ such that $h$
is \pdi and $A_\phi h$ is \pac, and the function $L h : G \to \R$
given by

\begin{equation}\label{eq:defgenpdmp}
  Lh(x) = A_\phi h(x) + q(x) \int r(x,dx') (h(x') - h(x))\,,
\end{equation}
is bounded (this function is then \pac thanks to~\eqref{eq:contqr}).

We say that a function $f:[0,+\infty[\times G \to \R$ is \emph{bounded on
finite time intervals} if it is bounded on all sets $\etc{0,t}\times
G$.

If $f$ is measurable, bounded on finite time intervals with $t\to
f(t,x)$ continuously differentiable for all $x$ and $x \to f(t,x)$
\pdi for all $t$, then
the process $f(t,X_t)$ has the decomposition
\begin{equation}\label{eq:semidecpdmp} 
  f(t,X_t) = f(0,X_0) + M^f_t + U^f_t
\end{equation}
where $M^f_t$ is a local martingale reduced by the sequence
$(T_n)_{n\ge 1}$, and $U^f_t$ is the continuous predictable process
\begin{equation}
  U^f_t = \int_0^t \Lrond f(s,X_s) \, ds\,,
\end{equation}
with
\begin{equation}
  \Lrond f(t,x) = \partial_t f(t,x) + L(f(t,.))(x)\,.
\end{equation}

If, in addition, the function $\Lrond f$ is bounded, then $M^f$ is a true
martingale and
\begin{equation}
  \esperance{x_0}{f(t,X_t)} = f(0,x_0) + \int_0^t
  \esperance{x_0}{\Lrond f(s,X_s)}\, ds\,.
\end{equation}

Furthermore, if $f^2$ satisfies the same assumptions,  then the predictable quadratic
variation of the local martingale $M^f$ is 
\begin{equation}
  \crochet{M^t}_t = \int_0^t (\Lrond (f^2) - 2f \Lrond f)(s,X_s)\, ds\,.
\end{equation}
A straightforward computation yields that $A_\phi(f^2)(x)= 2 f(x)
A_\phi f(x)$
and therefore
\begin{equation}\label{eq:quadvarpdmp}
 (\Lrond (f^2) - 2f \Lrond f)(t,x) =  q(x) \int r(x,dx')\, \etp{f(t,x') -f(t,x)}^2\,.
\end{equation}

We shall use the decomposition \eqref{eq:semidecpdmp} when $f$ is not
bounded on finite intervals, but $f(t,X_t)$ is locally bounded and
thus write $Lf(x)$ (resp. $\Lrond f(t,x)$) even when $f$ is not in the
domain of the generator.
\section{Additional Lemmas and Propositions}

\begin{lemma}[Uniform mass control]\label{lem:unifmasscontrol}
  Let $(\nu_t)_{t\ge 0}$ be a $(\mu,\varphi)$ SSMP satisfying
  assumption \ref{ass:growth}.
  Assume that for some $p\ge 1$,
  $$ \esp{\crochet{\nu_0,1}^p} < +\infty.$$
  Then for any $q \in \etc{1,\frac{p+1}{2}}$, and any $T>0$
    $$ \esp{ \sup_{ t\le T} \crochet{\nu_t,1}^q} < +\infty\,.$$
  \end{lemma}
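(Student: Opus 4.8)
The plan is to control the running maximum of the total mass $N_t := \crochet{\nu_t,1}$ by the \emph{total positive variation} of $N$, a quantity which only feels the mass-increasing jumps — precisely the ones whose rate is controlled by Assumption~\ref{ass:growth}. First I would observe that the flow preserves mass: since $A_\varphi 1 = 0$ we have $\crochet{\phi_t(m),1}=\crochet{m,1}$, so $t\mapsto N_t$ is constant between jumps and changes only at jump times, by the amount $k_i(\nu_{s^-},x,E)$ when the jump is of type $(i,x)$. Writing $A_t := \sum_{s\le t}(\Delta N_s)^+$ for the sum of the positive jumps, this yields the deterministic pathwise bound
$$ \sup_{s\le t} N_s \le N_0 + A_t\,,\qquad\text{hence}\qquad \sup_{s\le t}N_s^q \le 2^{q-1}\etp{N_0^q + A_t^q}\,. $$
It therefore suffices to prove $\esp{A_t^q}<+\infty$ for $t\le T$ and $q\le\tfrac{p+1}{2}$.

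Next I would identify the compensator of $A$. By compensation of the jump measure of the PDMP (Appendix~\ref{sec:appendicepdmp}), $M^A_t := A_t-\tilde A_t$ is a local martingale, where
$$ \tilde A_t = \intot \sum_i \int \nu_s(dx)\,\alpha_i(\nu_s,x)\,\etp{k_i(\nu_s,x,E)}^+\, ds\,. $$
A positive jump of $N$, and likewise a nonzero contribution to $\tilde A$, can only come from an index $i\in\bar{I}$ (by the very definition of $\bar{I}$), and each such increment is at most $\norme{k_i(\nu_s,x)}_{VT}\le C_k$; hence, using Assumption~\ref{ass:growth},
$$ \tilde A_t \le C_k \intot \sum_{i\in\bar{I}} q_i(\nu_s)\, ds \le C\intot \etp{1 + N_s}\, ds\,, $$
so that $\tilde A$ involves \emph{only} the controlled rates. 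Moreover each jump of $A$ is $\le C_k$, whence $[M^A]_t=\sum_{s\le t}(\Delta A_s)^2 \le C_k A_t$.

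To run the estimate rigorously I would localize with $\tau_a := \inf\ens{t\ge 0 : N_t\ge a}$. On $\ens{s<\tau_a}$ the increasing-jump rate $\sum_{i\in\bar{I}}q_i(\nu_s)\le |\bar{I}|\,C_q(1+a)$ is bounded, so $A_{\cdot\wedge\tau_a}$ is dominated by $C_k$ times a counting process with bounded intensity; thus $\Phi_a(t):=\esp{\sup_{s\le t}A_{s\wedge\tau_a}^q}<+\infty$ and $M^A_{\cdot\wedge\tau_a}$ is a true martingale. Using $A_{s\wedge\tau_a}\le \tilde A_{t}+\sup_{u\le t}\valabs{M^A_{u\wedge\tau_a}}$ (as $\tilde A$ is increasing) and taking $q$-th powers, the drift term is bounded uniformly in $a$: by Jensen in the time integral and the pointwise estimate $\esp{N_s^p}\le Ce^{Cs}$ of Proposition~\ref{pro:masscontrol} (together with $q\le\tfrac{p+1}{2}\le p$), one gets $\esp{\tilde A_t^q}\le C_T\intot(1+\esp{N_s^q})\,ds<+\infty$. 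For the martingale term, Burkholder--Davis--Gundy and $[M^A]_t\le C_k A_t$ give
$$ \esp{\sup_{u\le t}\valabs{M^A_{u\wedge\tau_a}}^q} \le C\,\esp{A_{t\wedge\tau_a}^{q/2}} \le C\,\Phi_a(t)^{1/2}\,. $$
Hence $\Phi_a(t)\le C_1 + C_2\,\Phi_a(t)^{1/2}$ with $C_1,C_2$ independent of $a$; since $\Phi_a(t)<+\infty$, this self-improving inequality forces $\Phi_a(t)\le 2C_1+C_2^2$ uniformly in $a$. Letting $a\to+\infty$ (recall $\tau_a\to+\infty$ a.s. by Proposition~\ref{pro:masscontrol}) and invoking monotone convergence yields $\esp{\sup_{s\le t}A_s^q}<+\infty$, and the claim follows.

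The principal obstacle, and the reason a direct application of the Itô/BDG formula to $f(m)=\crochet{m,1}^q$ fails, is that the predictable quadratic variation of $M^f$ feels \emph{all} jumps, including the mass-decreasing ones, whose rate is \emph{not} controlled by Assumption~\ref{ass:growth} (only the rates $q_i$ with $i\in\bar{I}$ are). Replacing $f(\nu_t)$ by the total positive variation $A_t$ is exactly the device that removes the decreasing jumps from every estimate. The two remaining technical points are the a priori finiteness of $\Phi_a$, handled by localization at $\tau_a$, and the passage from the half-power bound $\esp{\sup\valabs{M^A}^q}\le C\,\Phi_a^{1/2}$ to an absolute bound via the quadratic inequality.
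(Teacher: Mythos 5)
Your proof is correct, and it takes a genuinely different route from the paper's. The paper argues directly on $f(m)=\crochet{m,1}^q$: it recycles the drift bound from the proof of Proposition \ref{pro:masscontrol}, sets $Y_t=\sup_{s\le t}f(\nu_s)$, and reduces the claim via Gronwall to showing $\esp{\sup_{t\le T}M^f_t}<+\infty$, which it obtains from Doob's $L^2$ inequality together with the quadratic variation formula \eqref{eq:quadvarpdmp}; in bounding that quadratic variation it restricts the sum over kernels to $i\in\bar{I}$. That restriction is precisely the weak point you identified: dropping the $i\notin\bar{I}$ terms is legitimate in the \emph{drift} (there they are nonpositive), but in the quadratic variation they appear squared, hence nonnegative, and their rates are not controlled by Assumption \ref{ass:growth}; so the paper's computation is valid only when every kernel outside $\bar{I}$ is mass-preserving (as in the SIR examples) or happens to have a separately controlled rate (as in Bell--Anderson). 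Your device --- the pathwise bound $\sup_{s\le t}\crochet{\nu_s,1}\le\crochet{\nu_0,1}+A_t$ with $A$ the positive variation of the mass, compensation of $A$, then Burkholder--Davis--Gundy combined with $[M^A]_t\le C_kA_t$ and the self-improving inequality $\Phi_a\le C_1+C_2\,\Phi_a^{1/2}$ --- removes the uncontrolled decreasing jumps from every estimate, and therefore proves the lemma in the full generality of Assumption \ref{ass:growth}. It is also sharper: applying BDG at exponent $q$ itself, you only need $q\le p$, whereas the paper's $L^2$ argument requires $2q-1\le p$ mass moments, which is exactly where the restriction $q\le\frac{p+1}{2}$ in the statement comes from. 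The price is modest: you invoke BDG rather than just Doob, the compensator of the path functional $A_t$ deserves one extra line of justification within the paper's framework (for instance, view $(\nu_t,A_t)$ as a PDMP on $\Mrond_F(E)\times\R^+$ and apply the decomposition \eqref{eq:semidecpdmp} to $(m,a)\mapsto a$), and the a priori finiteness of $\Phi_a$ must be secured before the quadratic inequality can be exploited --- which you correctly do by localization at $\tau_a$.
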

  \begin{proof}
    The proof of Proposition \ref{pro:masscontrol} yields that with
    $f(m)=\crochet{m,1}^q$, we have
    $$ f(\nu_t) \le  f(\nu_0) + C' \int_0^t  (1+f(\nu_s))\, ds +
    M^f_t\,.$$
    Therefore, if $Y_t = \sup_{s\le t} f(\nu_s)$, we have for any
    $t\le T$
    $$ Y_t \le Y_0 + \sup_{t \le T} M^f_t + C' t + C' \int_0^t Y_s\,
    ds\,.$$
    We shall be able to apply Gronwall's Lemma, once we show that
    $\esp{\sup_{t \le T} M^f_t} < +\infty$. We shall use
    Cauchy-Schwarz inequality and the quadratic variation process.

    Indeed,
    \begin{align*}
      L f^2(m) -f(m) L f(m) &= \sum_i \int m(dx) \alpha_i(m,x)
      \etp{\crochet{m+k_i(m,x,.),1}^q - \crochet{m,1}^q}^2 \\
      &= \sum_{i \in \bar{I}} \int m(dx) \alpha_i(m,x)
      \etp{\crochet{m+k_i(m,x,.),1}^q - \crochet{m,1}^q}^2 \\
      &\le C (1 + \crochet{m,1}^{q-1})^2 \int \sum_{i\in \bar{I}}
      m(dx) \alpha_i(m,x) \\
      &\le C (1 + \crochet{m,1})(1 + \crochet{m,1}^{q-1})^2 \\
      &\le C ( 1 + \crochet{m,1}^{2q-1}) \le C' (1 +\crochet{m,1}^p)  \,.
    \end{align*}
    Therefore, by Doob's $L^2$ maximal inequality
    \begin{align*}
      \esp{\sup_{t \le T} M^f_t}^2 &\le C
      \esp{\crochet{M^f}_T} \\
      &\le C \int_0^T \esp{(L f^2 - 2 f Lf)(\nu_s)}\, ds\\
      &\le C \int_0^T (1 + \esp{\crochet{\nu_s,1}^p})\, ds < +\infty\,,
    \end{align*}
    where in the last bound we used Proposition \ref{pro:masscontrol}.
  \end{proof}

\bibliographystyle{imsart-nameyear}
  \bibliography{bibsir}

\end{document}